\numberwithin{equation}{section}
\theoremstyle{plain}
\newtheorem{thm}{Theorem}
\newtheorem*{prop*}{Proposition}
\newtheorem{lem}{Lemma}[section]
\newtheorem{prop}[lem]{Proposition}
\newcommand{\thmref}[1]{Theorem~\ref{#1}}
\theoremstyle{definition}
\newtheorem{rmk}[lem]{Remark}
\newtheorem{defi}{Definition}
\newcommand{\mbf}{\mathbf}
\newcommand{\mf}{\mathbf}
\newcommand{\q}{\quad}
\newcommand{\qq}{\qquad}
\newcommand{\mc}{\mathcal}
\newcommand{\mrm}{\mathrm}
\newcommand{\sltwo}{\mrm{SL}_2(\mf Z)}
\newcommand{\slz}{\mrm{SL}_2(\mbf Z)}
\begin{document}
\title[Hecke-Siegel type threshold]{Hecke-Siegel type threshold for square-free Fourier coefficients: an improvement.}

\author{Pramath Anamby}
\address{Department of Mathematics\\
Indian Institute of Science\\
Bangalore -- 560012, India.}
\email{pramatha@iisc.ac.in, pramath.anamby@gmail.com}

\author{Soumya Das}
\address{Department of Mathematics\\
Indian Institute of Science\\
Bangalore -- 560012, India.}
\email{soumya@iisc.ac.in, soumya.u2k@gmail.com}

\subjclass[2010]{Primary 11F30; Secondary 11F46} 
\keywords{Sturm bound, square free, Fourier coefficients} 

\begin{abstract}
We prove that if $f$ is a non zero cusp form of weight $k$ on $\Gamma_0(N)$ with character $\chi$ such that $N/(\text{conductor }\chi)$ square-free, then there exists a square-free $n\ll_{\epsilon} k^{3+\epsilon}N^{7/2+\epsilon}$ such that $a(f,n)\neq 0$. This significantly improves the already known existential and quantitative  result from previous works.
\end{abstract}
\maketitle

\section{Introduction}

It is an obvious fact that the Fourier coefficients of a modular form completely determine it. Thus an interesting question to ask is, whether any subset of set of all Fourier coefficients determines the form. The oldest result in this direction says that if $f$ is a form in $M_k(\Gamma)$ (here $\Gamma \subseteq \sltwo$ is a congruence subgroup, $k\geq 1$, and $M_k(\Gamma)$ is the space of modular forms of weight $k$ with respect to $\Gamma$) with a Fourier expansion, say for $\tau \in \mc H=\{ z \in \mf C \mid \Im(z)>0\}$, 
\begin{align} \label{fou}
 f(\tau) = \sum_{n=0}^\infty a(f,n) e^{2 \pi i n \tau}, 
\end{align} 
then there exists a number $A \geq 0$ depending on the space such that if $a(f,n)=0$ for all $n \leq A$, then $f=0$. The smallest such bound or threshold is due to Hecke \cite{hecke} and is known to be of the same magnitude as the quantity $ k \cdot  [\slz \colon \Gamma]$, a similar result is true for Siegel modular forms by results due to Siegel (see \cite{efrie} for example). This bound is popularly known as (mistakenly though) Sturm's bound, who proved a ‘mod $p$’ version of this result.

Many other affirmative results are available in the literature like the "multiplicity-one" results in the case of elliptic newforms of integral weights, and are crucial in many applications, both analytic and arithmetic in nature. For example, the same question in the setting of half-integral weight modular forms has a bearing to questions like non-vanishing of central $L$-values \cite{luo-rama}, and that for Siegel modular forms to certain automorphic lifting theorems \cite{saha} etc. We refer the reader to \cite{pra-sou1} for more extensive discussion on this.

Let us now discuss some of the recent results in this line of investigation. In \cite{pra-sou1}, the authors proved (essentially) that the elliptic cusp forms of integral weights and square-free levels are determined by their `square-free' Fourier coefficients (i.e., by those which are indexed by square-free numbers). This was motivated by the quest of such a result for the so-called Hermitian modular forms, which are automorphic with respect to the unitary group $U(n,n)$ over the ring of integers of an imaginary quadratic field. Let us recall those results in some detail.

Let $N$ be a positive integer and $\chi$ a Dirichlet character mod $N$ with conductor $m_\chi$. When $N/m_\chi$ is square-free it was proved in \cite{pra-sou1} that the set of all square-free Fourier coefficients determine any cusp form in $M_k(N,\chi)$ (see section~\ref{sec:setup}). Further in \cite{pra-sau2}, the existence of an analogue of the Sturm's bound for the square-free Fourier coefficients was proved. To be more precise, let us define $\mu_{\textsf{sf}}(k,N)$ to be the smallest integer such that whenever $f \in S_k(N,\chi)$ and $a(f,n)=0$ for all \textit{square-free} $n\le \mu_{\textsf{sf}}(k,N)$, then $f=0$. It is not a-priori clear that such a bound should exist. This was indeed shown to exist in \cite{pra-sau2}, and the following rather crude bound was shown. In particular the bound is exponential in the weight and level.
\begin{equation}\label{prbound}
\mu_{\textsf{sf}}(k,N)\le a_0\cdot N\cdot 2^{\frac{r(r-1)}{2}}e^{4r\log^2(7k^2N)},
\end{equation}
where $a_0$ is an absolute constant and $r=(k-1)N$.

The idea in \cite{pra-sau2} was to reduce to newforms following an argument of Balog and Ono \cite{BalogOno}, where one needs to work with many primes at which two distinct newforms have distinct eigenvalues. One way to handle this is the prime number theorem (PNT) for newforms, and the bad bound is due to the error term in the PNT.

The purpose of this article is to improve the above bound vastly. The main idea is to work with suitably modified $L$-functions, and replace the PNT by the Rankin-Selberg method. Of course one has to keep track on the dependence of the `analytic conductor' (essentially a function of weight and level); and reduce oneself to the case of newforms. The latter step is a little tricky. Let us now state the main result of this paper.

\begin{thm}\label{th:main}
Let $N$ be a positive integer and $\chi$ a Dirichlet character mod $N$ with conductor $m_\chi$ such that $N/m_\chi$ is square-free. Let $f\in S_k(N,\chi)$ be non-zero and fix any $\epsilon>0$. Then there exists a square-free integer $n\ll k^{3+\epsilon}N^{7/2+\epsilon}$ such that $a_f(n)\neq 0$, the implied constant depending only on $\epsilon$.
\end{thm}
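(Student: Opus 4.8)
The plan is to detect a non-vanishing square-free coefficient through a second moment that is controlled by a Rankin--Selberg $L$-function, and to locate the first such coefficient by balancing the main term of this second moment against its error term. The rough shape is: main term $\gg X$, error term a power of $X^{1/2}$ times the analytic conductor, so that positivity of the square-free second moment below $X$ forces a non-zero square-free coefficient below $X$.

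\emph{Reduction to newforms.} First I would use Atkin--Lehner--Li theory to write $f=\sum_{M\mid N}\sum_{d\mid N/M}\sum_{g}c_{g,d}\,g(dz)$, where $g$ runs over newforms of level $M$ (a multiple of $m_\chi$) and nebentypus $\chi$. Since $N/m_\chi$ is square-free, every shift parameter $d$ divides a square-free number, which is precisely the structural input that makes the square-free coefficients rigid enough to determine $f$, as in \cite{pra-sou1}. On a square-free index $n$ one has $a_f(n)=\sum_{g,d}c_{g,d}\,[d\mid n]\,\lambda_g(n/d)$, and the subtle point --- which I expect to be the main obstacle --- is that the shifted pieces $g(dz)$ may conspire to cancel on those $n$ coprime to the level (e.g. $f=g(dz)$ vanishes on all $n$ with $(n,N)=1$), so one cannot simply restrict to $(n,N)=1$. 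I would instead keep the full combination $h(n):=a_f(n)$ and, after a M\"obius/linear-algebra argument over the divisors $d$, retain a distinguished newform whose eigenvalue system is not annihilated by the square-free coefficients. This is where the square-free hypothesis on $N/m_\chi$ is essential, and where several powers of $N$ enter the final bound.

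\emph{The second moment and its main term.} I would form the Dirichlet series $D(s)=\sum_{n\text{ sf}}|h(n)|^2 n^{-s}$, normalising eigenvalues in the manner of Deligne so that $|\lambda_g(p)|\le 2$. Expanding $|h|^2$ produces diagonal terms $\sum_{n\text{ sf}}|\lambda_g(n/d)|^2 n^{-s}$ and off-diagonal terms in $\lambda_g(n/d)\overline{\lambda_{g'}(n/d')}$. Since distinct newforms have no pole in their Rankin--Selberg convolution, the only poles of $D(s)$ at $s=1$ arise from $g=g'$, and the square-free restriction multiplies the full convolution $L(s,g\times\bar g)=\zeta(s)L(s,\mathrm{sym}^2 g)\cdot(\cdots)$ by an Euler product $\prod_p\bigl(1+O(p^{-2s})\bigr)$ that is holomorphic and bounded for $\Re s>1/2$. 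Thus $D(s)$ has a simple pole at $s=1$ whose residue is a positive-definite Hermitian form in the coefficients $c_{g,d}$ built from the values $L(1,\mathrm{sym}^2 g)$; its positivity (that the square-free coefficients have not been killed) is exactly the content of \cite{pra-sou1}. The arithmetic heart is the lower bound for this residue: here I would invoke the Hoffstein--Lockhart-type estimate $L(1,\mathrm{sym}^2 g)\gg_\epsilon (kM)^{-\epsilon}$ (absence of a Siegel zero for the symmetric square), giving residue $\gg (kN)^{-\epsilon}$ under the normalisation $\sum_{g,d}|c_{g,d}|^2=1$.

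\emph{The error term and the balance.} Applying a smoothed Perron formula and shifting the contour to $\Re s=\tfrac12+\epsilon$, the tail of $\sum_{n\le X,\ \text{sf}}|h(n)|^2$ is controlled by the size of the relevant Rankin--Selberg $L$-functions on that line, where I would use the convexity bound while tracking the analytic conductor: $L(s,\mathrm{sym}^2 g)$ has conductor $\ll k^2N^2$, and the cross terms coming from the shifts and from pairs of distinct newforms supply the extra powers of $N$ that push the exponent from $3$ to $7/2$. Summing over the $\ll\dim S_k(N,\chi)\asymp kN$ newform/shift pairs via Cauchy--Schwarz on the $c_{g,d}$ should yield an error of the shape $X^{1/2}k^{3/2}N^{7/4+\epsilon}$. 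Comparing with the main term $\gg (kN)^{-\epsilon}X$, the square-free second moment up to $X$ is strictly positive as soon as $X\gg k^{3+\epsilon}N^{7/2+\epsilon}$, whence some square-free $n$ below this threshold has $a_f(n)=h(n)\neq 0$. The genuinely hard parts are the reduction of the first step --- handling the shifts without destroying the square-free structure --- and the conductor bookkeeping of the last step --- keeping every off-diagonal contribution strictly below the diagonal main term and extracting the clean exponents $3$ and $7/2$.
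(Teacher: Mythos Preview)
Your overall architecture matches the paper's: form a smoothed second moment of square-free coefficients, express it through Rankin--Selberg $L$-functions, extract a main term of size $x$ from the diagonal pole with residue $\gg (kN)^{-\epsilon}$ via Hoffstein--Lockhart, bound the shifted integral on $\Re s=\tfrac12+\epsilon$ by convexity with analytic conductor $\ll (1+|t|)^4k^2N^3$, absorb the $O(kN)$ newforms by Cauchy--Schwarz and the dimension formula, and balance. Your endpoint numerology $x^{1/2}k^{3/2}N^{7/4}$ versus $(kN)^{-\epsilon}x$ is exactly the paper's.

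The gap is in your reduction step. You correctly observe that restricting $a_f(n)$ to $(n,N)=1$ is fatal (it annihilates $g(dz)$ for $d>1$), and you propose to keep all shifts and treat them by an unspecified ``M\"obius/linear-algebra argument''. But then the residue of your $D(s)$ at $s=1$ is a Hermitian form in the $c_{g,d}$ that is \emph{not} diagonal in $d$: for fixed $g$ and square-free $d\neq d'$ the cross sum $\sum_{n\text{ sf},\,\mathrm{lcm}(d,d')\mid n}\lambda_g(n/d)\overline{\lambda_g(n/d')}\,n^{-s}$ still carries a pole (by multiplicativity it factors as a constant times a genuine diagonal sum in $|\lambda_g(m)|^2$). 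Citing \cite{pra-sou1} gives you only that this form is positive definite, not that its smallest eigenvalue is $\gg (kN)^{-\epsilon}$; that effective lower bound is exactly what your balance needs, and you have not supplied it.

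The paper never confronts that Hermitian form. In the decomposition $f=\sum_i\sum_{\delta}\alpha_{i,\delta}f_i(\delta\tau)$ with $\delta\mid N/m_\chi$ square-free, it selects the \emph{smallest} divisor $d_0$ for which some $\alpha_{i,d_0}\neq 0$, and then evaluates at $d_0 n$ with $(n,N)=1$. Minimality kills every $\delta<d_0$, and $\delta\nmid d_0n$ kills every other $\delta$, so $a_f(d_0 n)=\sum_i\alpha_{i,d_0}\lambda_{f_i}(n)$ with no shifts left. Since $d_0$ and $n$ are square-free and coprime, $d_0n$ is square-free, and the second moment $\sum^{\#}_{(n,N)=1}|a_f(d_0n)|^2\omega(n/x)$ now has a genuinely diagonal main term $\sum_i|\alpha_{i,d_0}|^2C(f_i,\omega)\,x$; the cross terms $f_i\neq\overline{f_j}$ have no pole and sit entirely in the error. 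This $d_0$-trick is the one concrete idea your proposal is missing.
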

Clearly this is a significant improvement over the previously known bound. This was in part motivated by an asymptotic in \cite{ko-ro-wu} on a similar subject. But we follow a simpler approach to get our result. By exploiting the properties of Rankin-Selberg $L$-functions, first we obtain either an asymptotic, or an upper bound for a (suitable smooth) weighted sum of the products $\lambda_f(n) \lambda_g(n)$ of square-free Fourier coefficients of normalized Hecke newforms, with explicit error. Once these have been established, the proof of \thmref{th:main} is obtained by reducing to this case from an arbitrary cusp form using newform theory.

\noindent\textbf{Acknowledgements.} This work was supported by the Research Institute for Mathematical Sciences, a Joint Usage/Research Center located in Kyoto University. It is a great pleasure for the second author to acknowledge the financial support and an enriching mathematical experience at the RIMS conference ``Analytic and Arithmetic Theory of Automorphic Forms'', held in January 2018. The first author is a DST- INSPIRE fellow at IISc, Bangalore and acknowledges the financial support from DST (India). The second author acknowledges financial support in parts from the UGC Centre for Advanced Studies, DST (India) and IISc, Bangalore during the completion of this work.

\section{Setup}\label{sec:setup}
\subsubsection*{General notation and preliminaries} Let $N$ be a positive integer and $\chi$ a Dirichlet character mod $N$ with conductor $m_\chi$. Then $M_k(N,\chi)$ denotes the space of modular forms of weight $k$ on $\Gamma_0(N)$ with character $\chi$ and $S_k(N,\chi)\subset M_k(N,\chi)$ denotes the space of all cusp forms.

We use the usual $\epsilon$ convention in analytic number theory: $\epsilon>0$ is an arbitrarily small number which may vary at different occurances. Moreover we adopt the standard Landau $O$-symbol: $A \ll B$ means $A \leq (\mrm{constant}) \cdot B$ with the constant depending on certain parameters at hand, usually mentioned explicily.
\begin{defi}["Naive" Rankin-Selberg Convolution]
Let $f,g\in S_k(N,\chi)$ be normalized Hecke newforms for some level dividing $N$.  Let $\lambda_f(n)$ and $\lambda_g(n)$ be the Fourier coefficients of $f$ and $g$ respectively. Then the "naive" Rankin-Selberg convolution is defined as
\begin{equation}
L(f\times \overline{g},s):=\sum_{n\ge 1} \lambda_f(n)\overline{\lambda_g(n)}n^{-s}, \q\qq \text{ for }\; \mrm{Re}(s)>1.
\end{equation}
\end{defi}
The usual Rankin-Selberg convolution $L(f\otimes g,s)$ of $f$ and $g$ is defined as
\begin{equation}\label{eq:relation}
L(f\otimes \overline{g},s):=\prod_{p}\prod_{i,j=1}^{2}(1-\alpha_i(p)\overline{\beta_j(p)}p^{-s})^{-1},
\end{equation}
where for a prime $p$, the Satake-parameters $\alpha_1(p),\alpha_2(p)$ and $\beta_1(p),\beta_2(p)$ are the roots of the quadratic polynomials $x^2-\lambda_f(p)x+\chi(p)$ and $x^2-\lambda_g(p)x+\chi(p)$ respectively. 

Since $\lambda_f(n)$ and $\lambda_g(n)$ are multiplicative, $L(f\times \overline{g},s)$ has an Euler product. Let $L_p(f\times \overline{g},s)$ denote the $p$th Euler factor of $L(f\times \overline{g},s)$. Then for $p\nmid N$, we have (see \cite[page 133]{iw-ko})
\begin{equation*}
L_p(f\times \overline{g},s)=(1-p^{-2s})\prod_{i,j=1}^{2}(1-\alpha_i(p)\overline{\beta_j(p)}p^{-s})^{-1}.
\end{equation*}
Let $L_N(f\times \overline{g},s):=\prod_{p\nmid N}L_p(f\times \overline{g},s)$. Then we can write
\begin{equation}\label{eq:LH}
L_N(f\times \overline{g},s)=L(f\otimes \overline{g},s) H(s),
\end{equation}
where $H(s)=\prod_p H_p(s)$ and $H_p(s)$ is given by
\begin{equation}\label{eq:hs}
H_p(s)=\begin{cases}
(1-p^{-2s}) \q\text{ if } p\nmid N;\\
\prod_{i,j=1}^{2}(1-\alpha_i(p)\overline{\beta_j(p)}p^{-s}) \text{ otherwise }.
\end{cases}
\end{equation}
Since $H(s)$ is equal to $\zeta^{(N)}(2s)^{-1}(:=\prod_{p\nmid N}(1-p^{-2s}))$ up to finitely many Euler products, $H(s)$ converges absolutely for $\mrm{Re}(s)>1/2$.

We would prefer to work with smooth cut-off functions from now on, and hence we consider a smooth and positive function $\omega$ with support in $[\tfrac{1}{2},1]$. The Mellin transform of $\omega$ is given by 
\begin{equation*}
\widetilde{\omega}(s):=\int_{0}^{\infty}y^{s-1}\omega(y)dy.
\end{equation*}
The integral converges for any $s\in\mbf C$, thus $\widetilde{\omega}(s)$ is entire and since $\omega$ is smooth and compactly supported, using integration by parts we get 
\begin{equation}\label{eq:omegadecay}
\widetilde{\omega}(s)\ll |s|^{-A-1}
\end{equation}
for any $A>0$, the implied constant depends only on $A$ and $\omega$.

Let $\sideset{}{^\#}\sum$ denote the sum over square-free integers. Then we prove the following result.
\begin{prop}\label{prop:asymp}
Let $f, g\in S_k(N,\chi)$ be normalized newforms for some level dividing $N$. Then we have for any $1/2< c< 1$ and $\epsilon>0$, the following
\begin{itemize}
\item [(i)] There exists a constant $C(f,\omega)>0$ such that
\[\underset{(n,N)=1}{\sideset{}{^\#}\sum_{n\ge 1}}|\lambda_f(n)|^2\omega(n/x)=C(f,\omega) x+O(x^{c}k^{1-c+\epsilon}N^{\frac{3(1-c)}{2}+\epsilon}).\]
\item [(ii)] If $f\neq \overline{g}$ \[ \underset{(n,N)=1}{\sideset{}{^\#}\sum_{n\le x}}\lambda_f(n)\overline{\lambda_g(n)}\omega(n/x)=O(x^{c}k^{1-c+\epsilon}N^{\frac{3(1-c)}{2}+\epsilon}).\]
\end{itemize}
In both (i) and (ii), the implied constants depend only on $\epsilon>0$.  Moreover, $ C(f,\omega) \gg_\epsilon  (kN)^{-\epsilon}$.
\end{prop}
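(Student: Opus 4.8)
The plan is to detect the square-free condition by an Euler product, express the smooth sum as a contour integral of the naive Rankin--Selberg series against $\widetilde{\omega}$, and then shift to the critical line, where a convexity estimate and the rapid decay \eqref{eq:omegadecay} control everything. Since $n\mapsto\lambda_f(n)\overline{\lambda_g(n)}$ is multiplicative, the Dirichlet series over square-free $n$ coprime to $N$ is
\[ D(s):=\prod_{p\nmid N}\bigl(1+\lambda_f(p)\overline{\lambda_g(p)}p^{-s}\bigr),\qquad\mrm{Re}(s)>1, \]
and by Mellin inversion, for $\sigma>1$,
\[ \underset{(n,N)=1}{\sideset{}{^\#}\sum_{n\ge 1}}\lambda_f(n)\overline{\lambda_g(n)}\,\omega(n/x)=\frac{1}{2\pi i}\int_{(\sigma)}D(s)\,\widetilde{\omega}(s)\,x^{s}\,ds. \]
Comparing the local factor with $L_p(f\times\overline{g},s)=1+\lambda_f(p)\overline{\lambda_g(p)}p^{-s}+O(p^{-2s})$ and using \eqref{eq:LH}, I would factor
\[ D(s)=L(f\otimes\overline{g},s)\,Z(s),\qquad Z(s):=H(s)\prod_{p\nmid N}\frac{1+\lambda_f(p)\overline{\lambda_g(p)}p^{-s}}{L_p(f\times\overline{g},s)}. \]
Each factor of the product is $1+O(p^{-2s})$, so $Z$ is holomorphic and bounded by $(kN)^{\epsilon}$ on every half-plane $\mrm{Re}(s)\ge c$ with $1/2<c<1$; the finitely many ramified factors $H_p$ $(p\mid N)$ from \eqref{eq:hs} are entire and cause no trouble.

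\emph{Contour shift.} I would move the integration from $\mrm{Re}(s)=\sigma$ to $\mrm{Re}(s)=c$. In case (i) one has $g=f$, and $L(f\otimes\overline{f},s)$ has a simple pole at $s=1$; its residue gives the main term $C(f,\omega)\,x$ with
\[ C(f,\omega)=\widetilde{\omega}(1)\,Z(1)\operatorname*{Res}_{s=1}L(f\otimes\overline{f},s). \]
In case (ii) the hypothesis $f\ne\overline{g}$ forces $L(f\otimes\overline{g},s)$ to be holomorphic at $s=1$, so no main term appears and only the shifted integral remains.

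\emph{The error term.} On $\mrm{Re}(s)=c$ the surviving integral is bounded by
\[ x^{c}\int_{-\infty}^{\infty}\bigl|L(f\otimes\overline{g},c+it)\bigr|\,\bigl|Z(c+it)\bigr|\,\bigl|\widetilde{\omega}(c+it)\bigr|\,dt. \]
Here I would apply the convexity bound for the degree-four Rankin--Selberg $L$-function, whose analytic conductor is $\ll k^{2}N^{3}(1+|t|)^{4}$, giving $L(f\otimes\overline{g},c+it)\ll_\epsilon(k^{2}N^{3})^{(1-c)/2+\epsilon}(1+|t|)^{2(1-c)+\epsilon}$. Together with $Z\ll(kN)^{\epsilon}$ and the decay \eqref{eq:omegadecay} with $A$ chosen large enough to make the $t$-integral converge, this is $\ll x^{c}k^{1-c+\epsilon}N^{3(1-c)/2+\epsilon}$, the claimed error in both parts.

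\emph{The lower bound, and the main obstacle.} For the size of $C(f,\omega)$, note that $\widetilde{\omega}(1)=\int_{0}^{\infty}\omega(y)\,dy$ is a fixed positive constant, and that up to harmless local factors $\operatorname*{Res}_{s=1}L(f\otimes\overline{f},s)$ equals $L(\mrm{sym}^2 f,1)$, for which a Hoffstein--Lockhart type lower bound gives $\gg(kN)^{-\epsilon}$; it then remains to bound $Z(1)$ from below by $(kN)^{-\epsilon}$, which is routine for the convergent product but requires checking the ramified factors $H_p(1)$ for $p\mid N$. I expect the two genuinely analytic inputs to be the crux: first, pinning down the analytic conductor as $\ll k^2N^3$ --- tracking the weight ($k^2$) and, via the newform structure at ramified primes (where the hypothesis that $N/m_\chi$ is square-free enters), the level ($N^3$) --- which fixes the exponents in the error; and second, the edge lower bound that keeps the main term $C(f,\omega)$ from degenerating.
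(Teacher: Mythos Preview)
Your proposal is correct and follows essentially the same route as the paper: your $D(s)$ is the paper's $L^\flat(f\times\overline{g},s)$, your $Z(s)$ is the paper's $H(s)H_1(s)$, and the contour shift, convexity bound with conductor $\ll k^2N^3(1+|t|)^4$, and Hoffstein--Lockhart-type lower bound on the residue all match. One small correction: the hypothesis that $N/m_\chi$ is square-free is not actually used in this proposition (it enters only later, in the reduction to newforms in the proof of the main theorem).
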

\begin{proof}
Let
\begin{equation}
L^\flat (f\times \overline{g},s):=\underset{(n,N)=1}{\sideset{}{^\#}\sum_{n\ge 1}}\lambda_f(n)\overline{\lambda_g(n)}n^{-s}=\prod_{p\nmid N}(1+\lambda_f(p)\overline{\lambda_g(p)}p^{-s}).
\end{equation}
Then we can write 
\begin{equation}\label{eq:LNH1}
L^\flat (f\times \overline{g},s)=L_N(f\times \overline{g},s)H_1(s),
\end{equation}
where $H_1(s)=\prod_{p\nmid N} H_{1,p}(s)$ and $H_{1,p}(s)$ is given by
\begin{equation*}
H_{1,p}(s)=
(1+\lambda_f(p)\overline{\lambda_g(p)}p^{-s})L_p(f\times \overline{g},s)^{-1}.
\end{equation*}
Let $H_{1,p}(X)=(1+\lambda_f(p)\overline{\lambda_g(p)}X)(1-X^2)^{-1}\prod_{i,j=1}^{2}(1-\alpha_i(p)\overline{\beta_j(p)}X)$. Now noting that for these primes, $H_{1,p}^{'}(0)=0$, we get that $H_1(s)$ converges absolutely for $\mrm{Re}(s)>1/2$ (see \cite{sou-koh-sen} for similar arguments). Thus using (\ref{eq:LH}) and (\ref{eq:LNH1}) we get
\begin{equation}\label{eq:lb}
L^\flat (f\times \overline{g},s)=L(f\otimes \overline{g},s)H(s)H_1(s).
\end{equation} 
In the following calculations we make use of the uniform convexity bound for $L(f\otimes\overline{g},s)$. For $1/2\le \sigma\le 1$, this is given by (see \cite[Theorem 5.41]{iw-ko})
\begin{equation}\label{eq:convf=g}
L(f\otimes\overline{g},s)\ll \mbf q(f\otimes \overline{g},s)^{\frac{(1-\sigma)}{2}+\epsilon},
\end{equation}
where the implied constants depend only on $\epsilon$. Here $\mbf q(f\otimes \overline{g},s)$ denotes the analytic conductor of $L(f\otimes\overline{g},s)$ (see \cite[chapter 5]{iw-ko} for details).

Using the Mellin inversion formula for $\widetilde{\omega}(s)$ (see \cite[page 90]{iw-ko}) , we have
\begin{align}\label{eq:intrep}
\underset{(n,N)=1}{\sideset{}{^\#}\sum_{n\ge 1}}\lambda_f(n)\overline{\lambda_g(n)}\omega(n/x)&=\frac{1}{2\pi i}\int_{(2)} L^\flat (f\times \overline{g},s)x^s\widetilde{\omega}(s)ds\nonumber\\
&=\frac{1}{2\pi i}\int_{(2)} L(f\otimes \overline{g},s)H(s)H_1(s)x^s\widetilde{\omega}(s)ds.
\end{align} 
We use (\ref{eq:lb}) to get the previous equality. Now we move the line of integration to $1/2<c<1$ ($c$ will be chosen later). Since Rankin-Selberg convolution is polynomially bounded in vertical strips and $\widetilde{\omega}$ has a rapid decay given by (\ref{eq:omegadecay}), the horizontal integrals do not contribute. 

If $f\neq \overline{g}$, then $L(f\otimes \overline{g},s)$ is entire. Otherwise $L(f\otimes \overline{g},s)$ has a pole at $s=1$ (see \cite[page 97]{iw-ko}). Thus we have
\begin{equation}\label{int:feqg}
\underset{(n,N)=1}{\sideset{}{^\#}\sum_{n\ge 1}}\lambda_f(n)\overline{\lambda_g(n)}\omega(n/x)= \delta(f,\overline{g})\mrm{Res}_{s=1}(F(s))x+\frac{1}{2\pi i}\int_{(c)}L(f\otimes \overline{g},s)H(s)H_1(s)x^s\widetilde{\omega}(s)ds,
\end{equation}
where $\delta(f,\overline{g})=1$ if $f=\overline{g}$ and $0$ otherwise and $F(s)$ denotes the integrand in (\ref{eq:intrep}). We let
\begin{equation}
C(f,\omega):=\mrm{Res}_{s=1}(F(s))=H(1)H_1(1)\mrm{Res}_{s=1}L(f\otimes\overline{f},s)\widetilde{\omega}(1)
\end{equation}
Since $\omega$ and $H(1)H_1(1)$ are positive, $C(f,\omega)>0$. Also $\mrm{Res}_{s=1}L(f\otimes\overline{f},s)\gg (kN)^{-\epsilon}$ (for a non--CM form this can be improved to $\log (kN)^{-1}$, see \cite{HL}). Moreover, from the fact that $H_1(s)$ converges absolutely for $Re(s)>1/2$, it follows that $H(1)\gg 1$, with implied constant absolute. Further, from (\ref{eq:hs}), we easily see that 
\begin{equation*}
H(1)\gg \prod_{p|N}(1-1/p)^4\gg 2^{-4\nu(N)},
\end{equation*}
where $\nu(N)$ denotes the number of prime divisors of $N$. Invoking the standard bound $\nu(N)\ll \log N/\log\log N\ll_{\epsilon}N^\epsilon$, we finally get for any $\epsilon>0$ that 
\begin{equation}
C(f,\omega)\gg_\epsilon (kN)^{-\epsilon}.
\end{equation}
Now we estimate the integral on the line $c$. Since both $H(s)$ and $H_1(s)$ converge absolutely for $\mrm{Re}(s)>1/2$, we have $H(s)H_1(s)\ll_\epsilon 1$. Using the uniform convexity bound (\ref{eq:convf=g}) and that (see \cite[page 609]{har-mic})
\begin{equation*}
\mbf q(f\otimes \overline{g},s)\ll (1+|t|)^{4} k^2N^3.
\end{equation*}
the integral on the line $c$ is bounded by
\begin{equation*}
x^{c}k^{1-c+\epsilon}N^{\frac{3(1-c)}{2}+\epsilon}\int\limits_{0}^{\infty}(1+|t|)^{-A+1-2c+\epsilon}dt.
\end{equation*}
We choose $A=2-2c+2\epsilon$, so that the above integral converges absolutely. Thus we have
\begin{equation}
\underset{(n,N)=1}{\sideset{}{^\#}\sum_{n\ge 1}}\lambda_f(n)\overline{\lambda_g(n)}\omega(n/x)=\delta(f,\overline{g})C(f,\omega) x+O(x^{c}k^{1-c+\epsilon}N^{\frac{3(1-c)}{2}+\epsilon}).
\end{equation}
This completes the proof of the proposition.
\end{proof}

\section{Proof of \thmref{th:main}}
\begin{proof}
Let $\{f_{1}, f_{2},......f_{s}\}$ be a basis of newforms of weight $k$ and level dividing $N$ for $S_k(N,\chi)$. Now by the theory of newforms, for any non zero $f\in S_k(N,\chi)$, there exist $\alpha_{i,\delta} \in \mf C$ such that $f(\tau)$ can be written uniquely in the form
\begin{equation}\label{eq:decomp}
f(\tau)=\sum_{i=1}^{s} \sum_{\delta m_\chi |N} \alpha_{i,\delta}f_{i}(\delta \tau)
\end{equation}
such that at least one $\alpha_{i,\delta}\neq 0$. Note that in the above summation, for $\delta>1$, $\alpha_{i,\delta}=0$ if $f_i$ is not a newform of level $N/\delta$. Moreover, since $N/m_\chi$ is square-free, we have $\delta$ is square-free in the above summation. Let $d_{0}$ be the smallest divisor of $N$ such that $\alpha_{i,d_{0}}\neq 0$ for some $i$. 

For $(n,N)=1$ we have
\begin{equation*}
a_f(d_0n)=\sum_{i=1}^{s} \sum_{\delta m_\chi |N} \alpha_{i,\delta}\lambda_{f_i}(\tfrac{d_0n}{\delta}).
\end{equation*}
For $\delta< d_0$, $\alpha_{i,\delta}=0$ by our choice of $d_0$. Also, since $(n,N)=1$, $\lambda_{f_i}(\tfrac{d_0n}{\delta})=0$ whenever $\delta\neq d_0$. Thus, after renumbering if necessary, we can write for some $r\le s$
\begin{equation*}
a_f(d_0n)=\sum_{i=1}^{r}\alpha_{i,d_0}\lambda_{f_i}(n).
\end{equation*}
Now summing over all such square-free $n$ with the weight function $\omega$ we get
\begin{equation}\label{eq:sumnew}
\begin{split}
{\underset{(n,N)=1}{\sideset{}{^\#}\sum_{n\ge 1}}}|a_f(d_0n)|^2\omega\Big(\frac{n}{x}\Big)&={\underset{(n,N)=1}{\sideset{}{^\#}\sum_{n\ge 1}}}\sum_{i=1}^{r}|\alpha_{i,d_0}|^2|\lambda_{f_i}(n)|^2\omega\Big(\frac{n}{x}\Big)\\
&\q+{\underset{(n,N)=1}{\sideset{}{^\#}\sum_{n\ge 1}}}\underset{i\neq j}{\sum_{i,j=1}^{r}}\alpha_{i,d_0}\overline{\alpha_{j,d_0}}\lambda_{f_i}(n)\overline{\lambda_{f_j}(n)}\omega\Big(\frac{n}{x}\Big).
\end{split}
\end{equation}
Note that since $(n,N)=1$, if $d_0\neq 1$, then all the $f_i$s appearing in the above sum are newforms of level $N/d_0$. If $d_0=1$, then the $f_i$s appearing in the above sum can be newforms of any level dividing $N$. Thus using the proposition (\ref{prop:asymp}), the l.h.s of (\ref{eq:sumnew}) is  
\begin{align*}
&\ge\sum_{i=1}^{r}|\alpha_{i,d_0}|^2\left(C(f_i,\omega) x+O(x^{c}k^{1-c+\epsilon}(N/d_0)^{\frac{3(1-c)}{2}+\epsilon})\right)\\
&\qq-\underset{i\neq j}{\sum_{i,j=1}^{r}}|\alpha_{i,d_0}\overline{\alpha_{j,d_0}}|\left(O(x^{c}k^{1-c+\epsilon}(N/d_0)^{\frac{3(1-c)}{2}+\epsilon})\right)\\
&\ge \left(\sum_{i=1}^{r}|\alpha_{i,d_0}|^2C(f_i,\omega)\right) x-|\sum_{i=1}^{r}\alpha_{i,d_0}|^2\left(O(x^{c}k^{1-c+\epsilon}(N/d_0)^{\frac{3(1-c)}{2}+\epsilon})\right).\\
\end{align*}
Now using $C(f_i,\omega)\gg_\epsilon (kN/d_0)^{-\epsilon}$ and the Cauchy--Schwarz in equality in the second term, we get that l.h.s of (\ref{eq:sumnew}) is
\begin{equation}\label{eq:finalwt}
\ge\sum_{i=1}^{r}|\alpha_{i,d_0}|^2\left(C_1(\epsilon)(kN/d_0)^{-\epsilon}x-C_2(\epsilon)(kN/d_0)x^{c}k^{1-c+\epsilon}(N/d_0)^{\frac{3(1-c)}{2}+\epsilon}\right),
\end{equation}
where $C_1(\epsilon)$ and $C_2(\epsilon)$ are constants depending only on $\epsilon$. Here we also use the fact that $r\le\text{dim}(S_k^{new}(N/d_0,\chi))\approx kN/d_0$ (for eg., see \cite{martin}).

The above inequality holds true for any such weight function $\omega$ defined as in section \ref{sec:setup}. Also we can choose $0\le\omega\le 1$ so that
\begin{equation}
{\underset{(n,N)=1}{\sideset{}{^\#}\sum_{n\ge 1}}}|a_f(d_0n)|^2\omega\Big(\frac{n}{x}\Big)\le {\underset{(n,N)=1}{\sideset{}{^\#}\sum_{x/2< n < x}}}|a_f(d_0n)|^2.
\end{equation}
Thus from (\ref{eq:finalwt}) we have
\begin{equation*}
{\underset{(n,N)=1}{\sideset{}{^\#}\sum_{x/2< n < x}}}|a_f(d_0n)|^2\ge\sum_{i=1}^{r}|\alpha_{i,d_0}|^2\left(C_1(\epsilon)(kN/d_0)^{-\epsilon}x-C_2(\epsilon)x^{c}k^{2-c+\epsilon}(N/d_0)^{\frac{5-3c}{2}+\epsilon}\right).
\end{equation*}
We choose $c=1/2+\epsilon$ and the result by noting that the r.h.s is $>0$ for $x\gg_\epsilon k^{3+\epsilon}(N/d_0)^{\tfrac{7}{2}+\epsilon}$.
\end{proof}

\begin{rmk}
The use of a uniform sub-convexity bound for $L(f\otimes\overline{g}, \tfrac{1}{2}+\epsilon+it)$ instead of the convexity bound (\ref{eq:convf=g}) will reduce the exponent of $k$ and $N$ further by a small amount. For example, the use of sub-convexity result from \cite{Mic-aks} to get a sub-convexity bound for $L(f\otimes\overline{g}, \tfrac{1}{2}+\epsilon+it)$ and using this to bound the integral in (\ref{int:feqg}) will give us a slightly better exponents of $k$ and $N$.
\end{rmk}


\begin{thebibliography}{9999}
\bibitem{pra-sou1} P. Anamby, S. Das: {\em Distinguishing Hermitian cusp forms of degree 2 by a certain subset of all Fourier coefficients}. Publicacions Matemàtiques, to appear (2018).

\bibitem{pra-sau2} P. Anamby, S. Das: {\em Sturm-like bound for square-free Fourier coefficients}. In: J. Bruinier, W. Kohnen (eds) L-Functions and Automorphic Forms. Contributions in Mathematical and Computational Sciences, 10, Springer, Cham, 2017, 1-7.

\bibitem{BalogOno} A. Balog, K. Ono: {\em The Chebotarev density theorem in short intervals and some questions of Serre}. J. Number Theory., 91(2), 2001, 356-371.

\bibitem{sou-koh-sen} S. Das, W. Kohnen, J. Sengupta: {\em On a convolution series attached to a Siegel Hecke cusp form of degree 2}. Ramanujan J., 33(3), 2014, 367-378.

\bibitem{efrie} E. Freitag: {\em Siegelsche Modulfunktionen}. Grundl. Math. Wiss., 254, Springer-Verlag, (1983).
\bibitem{har-mic} G. Harcos, P. Michel: {\em The subconvexity problem for Rankin-Selberg L-functions and equidistribution of Heegner points. II}. Invent. Math., 168(3), 2006, 581-655. 

\bibitem{hecke} E. Hecke: {\em Mathematische Werke}. Vandenhoeck \& Ruprecht, Gottingen, 1970. Mit
einer Vorbemerkung von B. Schoenberg, einer Anmerkung von Carl Ludwig Siegel, und
einer Todesanzeige von Jakob Nielsen, Zweite durchgesehene Auflage.

\bibitem{HL} J. Hoffstein, P. Lockhart: {\em Coefficients of Maass forms and the Siegel zero,} with an appendix by D. Goldfeld, J. Hoffstein, D. Lieman, Ann. of Math. (2), 140(1), 1994, 161-181.

\bibitem{iw-ko} H. Iwaniec, E. Kowalski: {\em Analytic Number Theory}. Am. Math. Soc, Colloquium Publications, 53, 2004.

\bibitem{ko-ro-wu} E. Kowalski, O. Robert, J. Wu: {\em Small gaps in coefficients of L-functions and $\mathfrak{B}$-free numbers in short intervals}. Rev. Mat. Iberoamericana, 23(1), 2007, 281-326.

\bibitem{luo-rama} W. Luo, D. Ramakrishnan: {\em Determination of modular forms by twists of critical L-values}. Invent. Math., 130(2), 1997, 371-398.

\bibitem{martin} G. Martin: {\em Dimensions of the spaces of cusp forms and newforms on $\Gamma_0 (N)$ and $\Gamma_1 (N)$}. Journal of Number Theory, 112(2), 2005, 298--331.
\bibitem{Mic-aks} P. Michel, A. Venkatesh: {\em The subconvexity problem for $\mrm {GL}_2$}. Publ.math.IHES., 111(1), 2010, 171-271.

\bibitem{saha} A. Saha: {\em Siegel cusp forms of degree 2 are determined by their fundamental Fourier coefficients}. Math.Ann., 355 (2013), 363-380.

\end{thebibliography}
\end{document}